 \newcommand{\DoTikzmark}[1]{%
  \tikz[remember picture] \coordinate[shift={(0,.7ex)}](#1);%
}
\newcommand{\DoTikzmarkcol}[1]{%
  \tikz[remember picture] \coordinate[shift={(.6ex,.7ex)}](#1);%
}
\newcommand{\colrow}[3][]{%
  \tikz[overlay,remember picture, line width=12pt]
    \draw[shorten >=-.1em, shorten <=-.1em, #1] (#2)--(#3);
}
\newcommand{\colcol}[3][]{%
  \tikz[overlay,remember picture, line width=12pt]
    \draw[shorten >=-.5em, shorten <=-.5em, #1] (#2)--(#3);
}
\newcommand{\h}{\mbox{\rm H}}
\title{Hadamard Matrix Torsion}
\author{Davide Lofano, Frank H.~Lutz}
\date{November 3, 2021}
\begin{document}

\maketitle

\begin{abstract}
We construct a series HMT$(n)$ of $2$-dimensional simplicial complexes with torsion 
$H_1($HMT$(n))=(\Z_2)^{{k}\choose{1}} \times (\Z_4)^{{k}\choose{2}} \times \cdots \times (\Z_{2^k})^{{k}\choose{k}}$,
$|H_1($HMT$(n))|=|$det(H$(n))|=n^{n/2} \in \Theta(2^{n \log n})$, where the construction is based 
on the Hadamard matrices $\h(n)$ for $n\geq 2$ a power of $2$, i.e., $n=2^k, \ k \geq 1$. 
The examples have linearly many vertices, their face vector is $f(\mbox{\rm HMT}(n))=(5n-1,3n^2+9n-6,3n^2+4n-4)$.

Our explicit series with torsion growth in $\Theta(2^{n \log n})$ is constructed in quadratic time $\Theta(n^{2})$ 
and improves a previous construction by Speyer \cite{Speyer:blog} with torsion growth in $\Theta(2^{n})$, 
narrowing the gap to the highest possible asymptotic torsion growth in $\Theta(2^{n^2})$ 
proved by Kalai~\cite{Kalai1983} via a probabilistic argument.
\end{abstract}

\section{Introduction}

The most elementary way to build a two-dimensional CW complex with torsion $\Z_r$ 
in the first integer homology starts out with a single polygonal disc with $r$ edges, $r\geq 2$, 
that are all oriented in the same direction and jointly identified. The resulting CW complex
has one vertex, one edge, and one two-dimensional face---it has homology $H_*=(\Z,\Z_r,0)$.

\begin{definition}[Matrix Disc Complexes]\label{Def:DCM}
     Let $M =(M_{ij})$ be an $(m\times n)$-matrix with integer entries.
    Let the set of two-dimensional \textbf{matrix disc complexes} $DC(M)$ associated with $M$
    comprise the CW complexes constructed level-wise in the following way:
    
    \begin{itemize}
     \item Every complex in $DC(M)$ has a single $0$-cell (with label $0$ in the following).
     
     \item The $1$-skeleton of a complex in $DC(M)$ has an edge cycle $a_j$ for every column index $j \in \{1, \ldots ,n\}$ of the matrix $M$.
     
     \item Every row $i$ of $M$ with row sum $s_i = |M_{i1}| + \ldots + |M_{in}|$, $i \in \{1, \ldots ,m\}$, contributes a polygonal disc with $s_i$ edges. 
              For every positive entry $M_{ij}$, ${M_{ij}}$ edges of the disc are oriented coherently and are assigned with the label $a_j$. 
              In the case of a negative entry, the direction of the corresponding edges is reversed; in the case of a zero-entry, 
              the respective edge does not occur.
    \end{itemize}

\end{definition}

\begin{example}
The $(1\times 1)$-matrix $M=(r)$ yields a single disk with the $r$ identified edges all oriented in the same direction---the elementary construction from above.
\end{example}

\begin{example}
\label{Ex:klein_rp2}
Let us now consider $M=(2 \ 2)$ with one row. The examples in DC($M$) have two cycles, $a_1$ and $a_2$, and a single disc with edges $2a_1+2a_2$. 
There are two choices for the edge sequences of the disc, either $a_1 a_1 a_2 a_2$ or $a_1 a_2 a_1 a_2$. In the first case, the resulting CW complex 
is the Klein bottle (Figure \ref{Fig:2}, left), in the second case, we obtain a pinched real projective plane $\mathbb{R}P^2$ (Figure \ref{Fig:2}, right). 
While the Klein bottle is a manifold, the latter example is not---thus the two examples in $DC(M)$ are not homeomorphic to each other. 

\begin{figure}[t]
\begin{center}
\includegraphics[width=0.6\columnwidth]{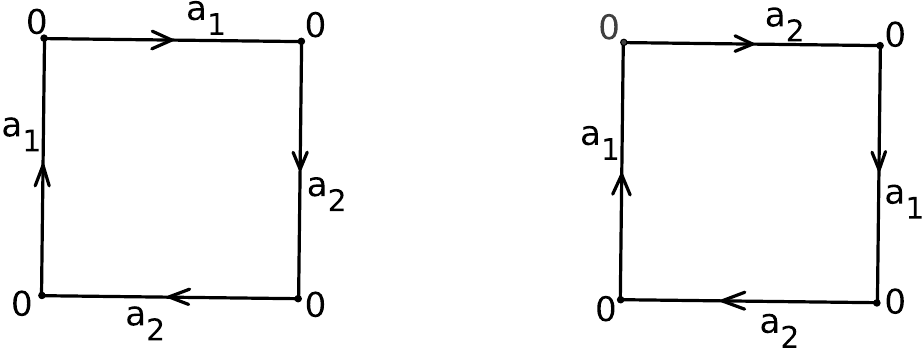}
\end{center}
\caption{Klein bottle on the left and a pinched $\mathbb{R}P^2$ on the right, both obtained from the matrix $M=(2 \ 2)$.}
\label{Fig:2}
\end{figure}
\end{example}

\begin{lemma}\label{Lemma:Hom}
 The examples in $DC(M)$ all have the same integer homology $H_*$. 
\end{lemma}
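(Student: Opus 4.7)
The plan is to show that the cellular chain complex of any complex in $DC(M)$ is determined by $M$ alone (independent of the cyclic ordering of edges chosen around each disc); since integer homology is an invariant of the chain complex, all examples in $DC(M)$ then share the same $H_{*}$.

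First I would write out the cellular chain complex $0 \to C_2 \xrightarrow{\partial_2} C_1 \xrightarrow{\partial_1} C_0 \to 0$. By Definition~\ref{Def:DCM} there is exactly one $0$-cell, so $C_0 \cong \mathbb{Z}$; there are $n$ one-cells $a_1,\dots,a_n$, so $C_1 \cong \mathbb{Z}^n$; and there are $m$ two-cells $D_1,\dots,D_m$, one per row, so $C_2 \cong \mathbb{Z}^m$. Because every $1$-cell $a_j$ is a loop based at the unique $0$-cell, we immediately get $\partial_1 = 0$, independently of any choice.

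Next I would identify $\partial_2$. The boundary $\partial_2 D_i$ is, by the cellular definition, the class in $H_1$ of the $1$-skeleton represented by the attaching map of $D_i$. The $1$-skeleton is a wedge of $n$ circles with basis $\{a_1,\dots,a_n\}$, and its $H_1$ is the abelianization of the free group on the $a_j$; thus the class of an edge word depends only on the signed count of each letter in it, not on the order in which the letters appear. By the construction in Definition~\ref{Def:DCM}, the boundary word of $D_i$ uses the letter $a_j$ exactly $M_{ij}$ times with sign $\operatorname{sign}(M_{ij})$ (and $0$ times when $M_{ij}=0$), so
\[
\partial_2 D_i \;=\; \sum_{j=1}^{n} M_{ij}\, a_j,
\]
regardless of which admissible cyclic ordering of edges around $D_i$ was chosen.

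Consequently the boundary matrices, and hence the entire integer chain complex, depend only on $M$. Taking kernels and cokernels gives the same $H_{*}$ for every representative in $DC(M)$. The only subtle point is the abelianization step in computing $\partial_2$, and I would make this explicit by noting that the cellular boundary factors through $H_1$ of the $1$-skeleton; once this is said, the argument is routine and no further computation is needed at this level of generality.
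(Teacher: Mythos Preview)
Your proposal is correct and follows essentially the same approach as the paper: both compute the cellular chain complex, observe $\partial_1=0$ because all $1$-cells are loops, and identify $\partial_2$ with the matrix $M$, so that $H_*$ depends only on $M$. Your version is somewhat more explicit about \emph{why} the ordering of edges around each disc is irrelevant (via the abelianization of $\pi_1$ of the wedge of circles), whereas the paper simply asserts that the rows of $M$ give the relations; but this is a difference in detail, not in strategy.
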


\begin{proof}
The examples $DC(M)$ all have a single vertex only and therefore are connected, respectively.
For every representative $C \in DC(M)$ it thus follows that $H_0(C)=\mathbb Z$.
Each edge of $C$ is a cycle, i.e., the first homology group $H_1(C)$ of $C$ 
is determined by the $n$ rows of $M$ as relations, and therefore $H_1$ coincides for all the examples in $DC(M)$.
Further, the second homology $H_2$ of any representative $C$ is simply the kernel of the matrix~$M$.
\end{proof}

\begin{example}[Projective plane]\label{Ex:RP2}
 
For the $(2\times 2)$-matrix\, $\h(2)=
\begin{pmatrix*}[r]
 1 & 1 \\  1 & -1
\end{pmatrix*}
$\, 
we have two discs with edges $a_1a_2$ and $a_1a_2^{-1}$, as in Figure \ref{Fig:1} on the left. 
If we glue together the two discs along the common edge $a_2$, as in Figure \ref{Fig:1} on the right, 
we obtain a single disc with two (identified) edges $a_1a_1$, the standard scheme for the 
real projective plane $\mathbb{R}P^2$ with homology $H_*=(\Z,\Z_2,0)$.
           
\begin{figure}[t]
\begin{center}
\includegraphics[width=0.73\columnwidth]{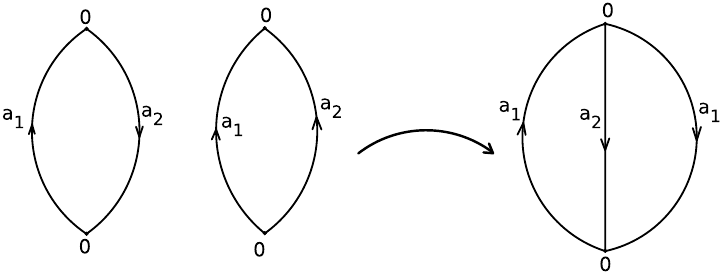}
\end{center}
\caption{$\mathbb{R}P^2$ as a CW complex with two disks.}
\label{Fig:1}
\end{figure}

\end{example}

A simple consequence of the proof of Lemma~\ref{Lemma:Hom} is that if $M$ is a square matrix with $\mbox{det}(M)\neq 0$, 
then $|H_1(C)|=|\mbox{det}(M)|$ for any example $C \in DC(M)$. Our goal in the following is to construct triangulations 
of CW-complexes $C \in DC(M)$ as abstract simplicial complexes with few vertices, in particular, for square matrices $M$ 
with large determinant, yielding simplicial complexes with huge torsion.

Kalai proved in \cite{Kalai1983} that, asymptotically, there are $\mathbb{Q}$-acyclic simplicial complexes on $n$ vertices with torsion growth in $\Theta(2^{n^2})$, 
and that this is the maximal possible growth. Recently, Newman \cite{newman2019} showed, via a randomized construction, 
that any abelian group $G$ can be obtained as torsion of a simplicial complex with $\Theta(\log (|G|)^{\frac{1}{2}})$ vertices.
Explicit classes of $\mathbb Q$-acyclic simplicial complexes were provided by Linial, Meshulam and Rosenthal~\cite{Linial2010}, 
called sum complexes, however, without control on the torsion growth.

The overall approach in this paper is inspired by a construction of Speyer \cite{Speyer:blog} of $2$-di\-men\-sional $\mathbb{Q}$-acyclic 
simplicial complexes on $\Theta(n)$ vertices with exponential torsion growth $\Theta(2^{n})$, corresponding to particular square matrices $M$
of size $\Theta(n)\times \Theta(n)$; see Section~\ref{Subsec:Speyer}.
For general matrices $M$, a first (elementary) triangulation approach for associated complexes $C \in DC(M)$ is given in Section~\ref{Subsec:triangle}.

Starting with Hadamard matrices $\h(n)$, we give an explicit triangulation construction in quadratic time $\Theta(n^2)$ 
that achieves torsion growth $\Theta(2^{n \log n})$; see Section~\ref{Sec:improved}.

\section{Preliminaries}
\label{Sec:prelim}

In this section, we first  give a (straight-forward) procedure to triangulate  a general complex $C \in DC(M)$
for some given integer $(m \times n)$-matrix $M=(M_{ij})$. Afterwards, we discuss basic facts about Hadamard matrices $\h(n)$ 
that will be used in the main Section~\ref{Sec:improved}, where we introduce an improved triangulation scheme
for the complexes corresponding to these square matrices to obtain our torsion growth bound $\Theta(2^{n \log n})$.

\subsection{A first triangulation procedure}
\label{Subsec:triangle}

Given some $(m \times n)$-matrix $M=(M_{ij})$, $i \in \{1, \ldots , m\}$, $j \in \{1, \ldots , n\}$, the complexes $C \in DC(M)$ are $2$-dimensional CW complexes 
with a single vertex (with label $0$), $m$~cycles $a_j, \ j \in \{1, \ldots , n\}$, corresponding to the column indices,
and a disc for every non-zero row of $M$. In the following, we assume that $M$ has no zero columns and rows.
(In case rows appear multiple times, we saw by Example~\ref{Ex:klein_rp2} that different choices for the edge sequences
can yield different gluings of the discs.)

When we subdivide a CW complex $C$ to obtain a triangulation of it as an abstract simplicial complex $K$,
we have to ensure that $K$ has no loops and no parallel edges:

\begin{itemize}

 \item The only vertex $0$ of $C$ is kept as vertex $0$ in $K$.

 \item Each cycle $a_j, \ j \in \{1, \ldots , n\}$, is triangulated by using two extra vertices, $v^1_j$~and~$v^2_j$.
 
 \item We next triangulate the $m$ discs corresponding to the $m$ rows of $M$.
 For each row $i \in \{1, \ldots , m\}$, the respective disc is an $s_i$-gons for  $s_i = |M_{i1}| + \ldots + |M_{in}|$.
 
 Into each $s_i$-gon we place 
 a $\lceil \frac{3}{2}s_i\rceil$-gon using $\lceil \frac{3}{2}s_i\rceil$ new vertices $c_i^k, \ k \in \{1, \ldots , \lceil \frac{3}{2}s_i\rceil\}$. 
 The inside of the $\lceil \frac{3}{2}s_i\rceil$-gon can be triangulated arbitrarily  by consecutively adding diagonals. 
 The annulus between the inner $\lceil \frac{3}{2}s_i\rceil$-gon and the outer $s_i$-gon is triangulated by connecting 
 any inner vertex with three consecutive vertices of the $s_i$-gon, creating a cone, and then filling the remaining gaps with triangles. 
 In particular, we choose a starting vertex on the $s_i$-gon and a direction, and we connect $c_i^1$ with the starting vertex 
 and the two consecutive ones. Next, we connect $c_i^2$ with the last vertex to which we connected $c_i^1$ 
 and then to the consecutive two vertices. We continue like this till we reach the last vertex of the $\lceil \frac{3}{2}s_i\rceil$-gon 
 which will be connected with the starting vertex again (be careful, this last vertex could be connected with only two vertices 
 of the outer $s_i$-gon instead of three). See Figure~\ref{Fig:3} for an example on how to fill the inside. 
 
\end{itemize}

\begin{remark}\label{Rem:1}
 In many examples, we could actually triangulate the polygonal discs with fewer vertices than according to the above procedure. 
 However, improvements will depend on the concrete entries of the given matrix.
\end{remark}

Since all the polygonal discs are triangulated in a way such that no two discs share an interior edge we have that the homology 
of the constructed simplicial complex $K$ is the same as that of the original CW complex $C$.

In the following proposition on triangulations of matrix disc complexes we use the Smith Normal Form of a matrix $M$,
so before the proposition, let us remember the definition.

\begin{definition}(Smith \cite{smith1861})
 Let $M$ be a nonzero $(m \times n)$-matrix over a principal ideal domain. There exist invertible $(m \times m)$- and $(n \times n)$-matrices 
 $S$ and $T$, respectively, such that $M=SAT$ and 
 \begin{equation*}
 A=
\begin{pmatrix*}[r]
    \alpha_1 & 0 & & \dots &  &  & 0 \\
    0 & \alpha_2 &0 && \dots   & & 0\\
    \vdots & 0 & \ddots  & 0 & \dots & 0 & \vdots \\
     & \vdots &0 & \alpha_r & & &  \\
     &  & &  & 0 &  &  \\
     &  & &  &  & \ddots  &  \\
    0 & 0 & & \dots &  &  &  0\\
\end{pmatrix*},
\end{equation*}
where $\alpha_i \, | \, \alpha_{i+1}$ for all $1 \leq i < r$. $A$ is called the \emph{Smith Normal Form} of $M$.
\end{definition}

 \begin{proposition}\label{Prop:1}
  Given an $(m \times n)$-matrix $M$ and $A=(\alpha_i)$ its Smith Normal Form, 
  there is a $2$-dimensional simplicial complex $K$ on $V(K)$ vertices with
  \begin{equation*}
   V(K) \leq 2n+m+1+\frac{3}{2}\sum_{i,j}{| M_{ij} |}.
  \end{equation*}
  Furthermore,
  \begin{equation*}
   H_1(K)=\Z/\alpha_1 \Z \times \cdots \times \Z/\alpha_r \Z \times \Z^{n-r}.
  \end{equation*}

 \end{proposition}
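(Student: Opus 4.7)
The plan is to apply the explicit triangulation procedure of Section~\ref{Subsec:triangle} to any representative $C \in DC(M)$, count the vertices it uses, and then read $H_1$ off the cellular chain complex of $C$ via the Smith normal form of $M$.

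For the vertex count I would just tally the three contributions of the construction: the single base vertex $0$, the two auxiliary vertices $v_j^1, v_j^2$ subdividing each of the $n$ loops $a_j$, and the $\lceil \tfrac{3}{2}s_i \rceil$ interior vertices $c_i^k$ introduced inside the outer $s_i$-gon corresponding to row $i$, where $s_i = \sum_j |M_{ij}|$. Using $\lceil \tfrac{3}{2}s_i \rceil \le \tfrac{3}{2}s_i + 1$ and summing over $i$ gives
\[
V(K) \;=\; 1 + 2n + \sum_{i=1}^{m} \left\lceil \tfrac{3}{2} s_i \right\rceil \;\le\; 1 + 2n + m + \tfrac{3}{2}\sum_{i,j}|M_{ij}|,
\]
which is exactly the stated bound.

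For the first homology I would invoke the observation made just before the proposition: because the $m$ polygonal discs are triangulated with disjoint interior vertices and no two discs share an interior edge, $K$ and $C$ have the same integer homology. By (the proof of) Lemma~\ref{Lemma:Hom}, $H_1(C)$ is the cokernel of the cellular boundary map $\partial_2 : \Z^m \to \Z^n$, which sends the generator of the $i$-th disc to $\sum_j M_{ij}\, a_j$ and is therefore represented by $M^T$; hence $H_1(C) = \Z^n / \operatorname{im}(M^T)$, i.e.\ $\Z^n$ modulo the row span of $M$. Writing $M = SAT$ in Smith normal form with $S,T$ invertible over $\Z$, the relation $M^T = T^T A^T S^T$ and the fact that $S^T$ permutes $\Z^m$ show that $\operatorname{im}(M^T) = T^T \cdot \operatorname{im}(A^T)$; since $T^T$ is a $\Z$-linear automorphism of $\Z^n$, the quotient is isomorphic to $\Z^n / \operatorname{im}(A^T)$, and the latter is visibly $\Z/\alpha_1\Z \oplus \cdots \oplus \Z/\alpha_r\Z \oplus \Z^{n-r}$.

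The argument is essentially bookkeeping; the only mild subtlety is keeping straight that $\partial_2$ is represented by $M^T$ (not $M$), so that it is the row span, not the column span, of $M$ that gets quotiented out. Once that is pinned down, Smith normal form yields the claimed decomposition instantly, and the vertex bound is a direct sum over the triangulation's building blocks.
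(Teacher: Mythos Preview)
Your proof is correct and follows essentially the same approach as the paper's: the vertex count is identical, and for the homology the paper simply asserts that $M$ is the boundary matrix for $K$ and invokes the Smith Normal Form, whereas you spell out the cokernel computation (including the $M^T$ versus $M$ bookkeeping) in more detail. The extra care about $M^T$ is harmless here since the invariant factors of $M$ and $M^T$ coincide.
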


 \begin{proof}
 Let $M$ be an $(m \times n)$-matrix, $C \in$ DC($M$), $K$ a triangulation of $C$ as described above, and $s_i = |M_{i1}| + \dots + |M_{in}|$. We easily see that  
 \begin{equation*}
  \sum_{i=1}^{m}{\left\lceil \frac{3}{2}s_i\right\rceil} \leq m+\frac{3}{2}\sum_{i,j}{|M_{ij} |}.
 \end{equation*}
 For the number of vertices $V(K)$ of $K$ we obtain the bound
 \begin{equation*}
   V(K)=1+2n+\sum_{i=1}^{m}{\left\lceil\frac{3}{2}s_i\right\rceil}\leq 2n+m+1+\frac{3}{2}\sum_{i,j}{|M_{ij} |},
  \end{equation*}
with $1$ vertex for the original vertex of $C$, $2n$ vertices to triangulate the $n$ cycles, and $\sum_{i=1}^{m}{\left\lceil \frac{3}{2}s_i\right\rceil}$
interior vertices in the $m$ discs. Since $M$ is the boundary matrix for the homology of $K$, the homology of $K$ is represented by the Smith Normal Form $A$ of~$M$.
 \end{proof}

\begin{remark}
 Given a square matrix $M$ with det$(M) \neq 0$, it follows by Proposition~\ref{Prop:1} that for the simplicial complex $K$ associated with $M$, $|H_1(K)|=|$det$(M)|$.  
\end{remark}

\subsection{Speyer's construction}
\label{Subsec:Speyer}
 
Speyer \cite{Speyer:blog}  provided a construction to produce $2$-dimensional $\mathbb{Q}$-acyclic simplicial complexes 
for which the size of the torsion grows exponentially in the number of vertices.

Let $k \geq 2$ be an integer and $k=\gamma_m 2^m+\gamma_{m-1} 2^{m-1} + \ldots +\gamma_0 2^0$ be its binary expansion, 
with leading coefficient $\gamma_m=1$ and otherwise $\gamma_i \in \{0, 1\}$ for all $ 0 \leq i \leq m-1$. 
An $((m+1) \times (m+1))$-matrix $M(k)$ is constructed in the following way:
\begin{itemize}
 \item The first row contains the entries $(-1)^i \gamma_{m-i}$ for $ i \in \{0, \ldots ,m \}$.
 
 \item The lower part of the matrix $M(k)$ is an $(m \times (m+1))$-matrix with $1$'s on the first diagonal followed by $2$'s 
 on the diagonal to the right, and all other entries equal to zero. It is then easy to see that det$(M(k))=k$.
\end{itemize}

Using Proposition \ref{Prop:1}, there is a $2$-dimensional simplicial complex $K$ on $V(K) \leq 3(m+1)+1+\frac{3}{2}(3m+(m+1))\leq 9m+6$ vertices 
corresponding to $M(k)$ that has torsion of size $k$. Since the number of vertices is linear in $m$, the torsion $k$ grows exponentially 
in the size $n=m+1$ of the matrix, i.e., $k\in\Theta(2^n)$.

\begin{example}\label{Ex:Speyer}
For an explicit example, we consider $k=11=8+2+1$ with
\[M(11)=
 \begin{pmatrix*}[r]
\DoTikzmark{num1}1 & 0 & 1 & -1 \DoTikzmark{num2} \\
1 & 2 & 0 & 0  \\
0 & 1 & 2 & 0  \\
0 & 0 & 1  &  2
\end{pmatrix*}.
\]
\colrow[black ,opacity=.2]{num1}{num2}

\noindent
Figure \ref{Fig:3} displays a triangulation $K(11)$ of a representative $C \in$ DC($M(11)$) with $29$ vertices
that has torsion $\Z_{11}$. 

\begin{remark}
In case $k$ is prime, the torsion of complexes corresponding to the matrix $M(k)$ is cyclic, but in general it is a product of the factors
in the Smith Normal Form of~$M(k)$.
\end{remark}

As already pointed out in Remark \ref{Rem:1}, also in this particular example  $K(11)$  we could save on the number of vertices 
necessary for the triangulation of the interiors of the polygonal discs. However, it is open what the minimum number 
of vertices is for a re-triangulation of $K(11)$.

\begin{figure}[t]
\begin{center}
\includegraphics[width=0.8\columnwidth]{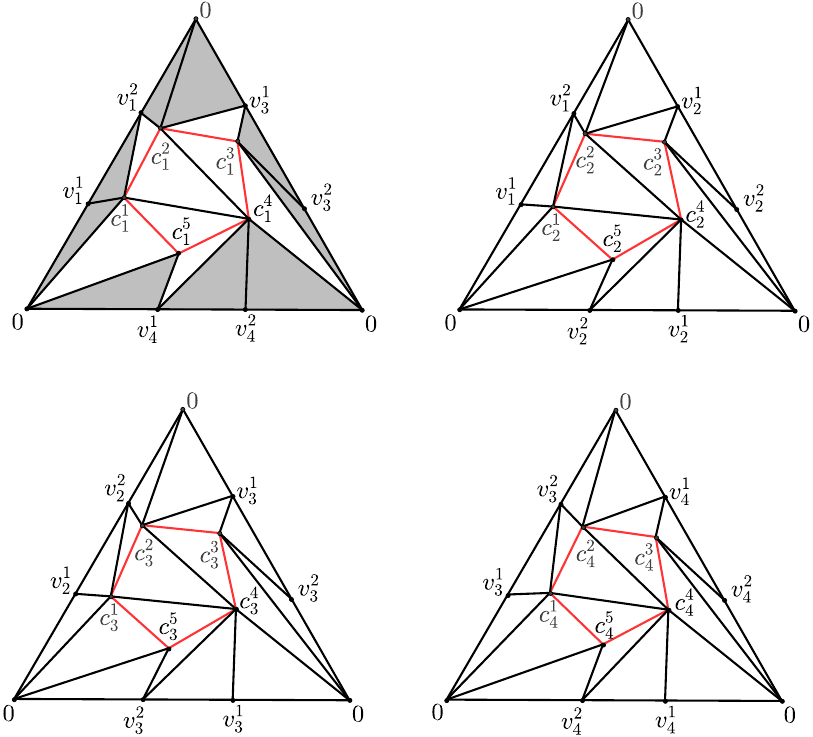}
\end{center}
\caption{The four subdivided triangles of the complex $K(11)$ of Example~\ref{Ex:Speyer}. 
In the upper left part of $K(11)$ we highlighted the five cones of inner vertices with respect to the subsequences of three boundary vertices.}
\label{Fig:3}
\end{figure}

\end{example}

\enlargethispage*{5mm}

\subsection{Hadamard matrices}
 
In our quest to construct $2$-dimensional simplicial complexes with few vertices but high torsion via matrices, Hadamard matrices are of particular interest.
In fact, Hadamard matrices were used earlier in geometry for extremal constructions; see for example \cite{HUDELSON} and \cite{ziegler2000lectures}. 
 
\begin{definition}
 A Hadamard matrix $\h$ is a square $(n \times n)$-matrix whose entries are either $+1$ or $-1$ and whose rows are mutually orthogonal.
\end{definition}

Hadamard matrices $\h$ have in common that their determinants attain the Hadamard bound, $|\mbox{\rm det}(\h)|=n^{\frac{n}{2}}$. 
In general, $|\mbox{\rm det}(M)|\leq n^{\frac{n}{2}}$ for any integer $(n \times n)$-matrix 
\mbox{$M=(M_{ij})$} with $|M_{ij}| \leq 1$, and a matrix $M$ attains the bound if and only if it is a Hadamard matrix \cite{hadamard1893}.

 It is known that the order of a Hadamard matrix must be $1$, $2$, or a multiple of $4$, but it is open whether Hadamard matrices exist for all multiples of 4. 
  However, a very nice construction of Sylvester  \cite{sylvester1867}  tells us that if $\h$ is a Hadamard matrix of order $n$, then the matrix
 $
 \begin{pmatrix*}[r]
  \h & \h \\
  \h & -\h
 \end{pmatrix*}
$ is a Hadamard matrix of order $2n$.

The following sequence of matrices $\h(n), \ n=2^k, \ k \geq 1$, also called Walsh matrices \cite{walsh1923}, are Hadamard matrices:

\begin{equation*}
 \h(1)=
\begin{pmatrix*}[r]
 1 
\end{pmatrix*},
\end{equation*}

\begin{equation*}
 \h(2^k)=
\begin{pmatrix*}[r]
 \h(2^{k-1}) & \h(2^{k-1}) \\
 \h(2^{k-1}) & -\h(2^{k-1})
\end{pmatrix*}, \ \mbox{\rm for}  \ k \geq 1.
\end{equation*}
In the following, we will always refer to this sequence when we talk about Hadamard matrices. The matrix $\h(2)$
was discussed before in Example~\ref{Ex:RP2} above.

\begin{lemma}\label{Lemma:Smith}
 Let $n=2^k, \ k \geq 0$, and let A $=(\alpha_i)$ be the Smith Normal Form of\, $\h(n)$, with\, $\h(n)=SAT$\, for integral invertible matrices $S$ and $T$. 
 
 Then  $\mbox{\rm diag}(A)=(\alpha_i)=(2^0,2^1, \ldots , 2^1,\ldots , 2^j, \ldots , 2^j, \ldots, 2^{k-1}, \ldots , 2^{k-1}, 2^k)$, where each $2^j$ appears $k \choose{j}$ times.
\end{lemma}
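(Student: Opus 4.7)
The plan is to exploit the recursive Sylvester construction: the Walsh matrix satisfies $\h(2^k) = \h(2)^{\otimes k}$, i.e., the $k$-fold Kronecker product of $\h(2)$ with itself. This follows immediately by induction from the block formula $\h(2^k) = \left(\begin{smallmatrix} \h(2^{k-1}) & \h(2^{k-1}) \\ \h(2^{k-1}) & -\h(2^{k-1}) \end{smallmatrix}\right)$ together with the observation that Kronecker product with $\h(2) = \left(\begin{smallmatrix}1 & 1 \\ 1 & -1\end{smallmatrix}\right)$ produces exactly that block shape.

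Next, I would compute the Smith Normal Form of $\h(2)$ directly: the single row operation $R_2 \mapsto R_2 - R_1$ followed by the column operation $C_2 \mapsto C_2 - C_1$ brings $\h(2)$ to $\mathrm{diag}(1,2)$, so $\h(2) = S_1\, \mathrm{diag}(1,2)\, T_1$ with $S_1, T_1 \in GL_2(\Z)$. Taking the $k$-fold Kronecker product of this identity yields
\begin{equation*}
\h(2^k) \;=\; S_1^{\otimes k}\,\bigl(\mathrm{diag}(1,2)\bigr)^{\otimes k}\,T_1^{\otimes k}.
\end{equation*}
Here I use the standard fact that Kronecker product is compatible with ordinary matrix multiplication, $(AB)\otimes(CD) = (A\otimes C)(B\otimes D)$, and that the Kronecker product of unimodular integer matrices is again unimodular (since $\det(A\otimes B) = \det(A)^m\det(B)^n = \pm 1$).

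The middle factor $\bigl(\mathrm{diag}(1,2)\bigr)^{\otimes k}$ is diagonal: its diagonal entries are precisely the $2^k$ products $2^{\epsilon_1+\epsilon_2+\cdots+\epsilon_k}$ with $\epsilon_i \in \{0,1\}$. Grouping by the weight $j = \epsilon_1+\cdots+\epsilon_k$, exactly $\binom{k}{j}$ entries equal $2^j$ for each $j \in \{0,1,\ldots,k\}$. A permutation of rows and the matching permutation of columns (both unimodular operations over $\Z$) rearrange these entries into nondecreasing order, giving a diagonal matrix $A$ whose successive entries satisfy $\alpha_i \mid \alpha_{i+1}$ (trivially, since they are a sorted list of powers of $2$). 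Absorbing the permutation matrices into the surrounding unimodular factors $S_1^{\otimes k}$ and $T_1^{\otimes k}$ yields the required Smith decomposition.

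The only subtlety is verifying that the Kronecker product of two SNF decompositions really is a valid SNF decomposition (up to sorting the diagonal); this is clean here because all invariant factors of $\h(2)$ are powers of the single prime $2$, so divisibility among products of these factors is automatic and no further integer moves are needed beyond the sorting permutations. I do not expect any real obstacle — the core content is just the Kronecker factorization, the elementary SNF of the $2\times 2$ block, and the binomial bookkeeping of the exponents.
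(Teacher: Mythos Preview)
Your argument is correct and is in spirit the same as the paper's, but packaged more cleanly. The paper proceeds by induction: given an SNF decomposition $A=S^{-1}\h(n)T^{-1}$, it exhibits explicit block matrices $\widetilde{S}=\left(\begin{smallmatrix} S^{-1}&0\\ S^{-1}&-S^{-1}\end{smallmatrix}\right)$ and $\widetilde{T}=\left(\begin{smallmatrix} T^{-1}&-T^{-1}\\ 0&T^{-1}\end{smallmatrix}\right)$ and multiplies out to obtain $\widetilde{S}\,\h(2n)\,\widetilde{T}=\left(\begin{smallmatrix} A&0\\ 0&2A\end{smallmatrix}\right)$, then invokes Pascal's identity $\binom{k+1}{j}=\binom{k}{j-1}+\binom{k}{j}$ for the multiplicity count. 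Your Kronecker-product formulation is exactly this computation done all at once: the paper's $\widetilde{S}$ and $\widetilde{T}$ are nothing but $\left(\begin{smallmatrix}1&0\\1&-1\end{smallmatrix}\right)\otimes S^{-1}$ and $\left(\begin{smallmatrix}1&-1\\0&1\end{smallmatrix}\right)\otimes T^{-1}$, i.e.\ one factor of your $S_1^{\otimes k}$, $T_1^{\otimes k}$. What you gain is that the invariant-factor multiplicities $\binom{k}{j}$ fall out immediately from the combinatorics of the exponent vectors $(\epsilon_1,\ldots,\epsilon_k)\in\{0,1\}^k$, with no separate inductive bookkeeping needed.
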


\begin{proof}
 We prove the statement by induction, where the base case for $n=1$ is clear, since $\h(1)=(1)$.
 
 Let $A=S^{-1} \h(n) T^{-1}$ be the Smith Normal Form of $\h(n)$. For the following two invertible matrices\, 
 $\widetilde{S}=\begin{pmatrix*}[c]  S^{-1} & 0 \\ S^{-1} & -S^{-1} \end{pmatrix*}$\, and\, $\widetilde{T}=\begin{pmatrix*}[c]  T^{-1} & -T^{-1} \\ 0 & T^{-1} \end{pmatrix*}$\, we have:
 
 \begin{align*}
  \widetilde{S} \h(2n) \widetilde{T} &= \begin{pmatrix*}[c]  S^{-1} & 0 \\ S^{-1} & -S^{-1} \end{pmatrix*} 
  \begin{pmatrix*}[r] \h(n) & \h(n) \\ 
   \h(n) & -\h(n) \end{pmatrix*} \begin{pmatrix*}[c]  T^{-1} & -T^{-1} \\ 0 & T^{-1} \end{pmatrix*} \\
   & = \begin{pmatrix*}[c] S^{-1} \h(n) & S^{-1} \h(n) \\ 0 & 2 S^{-1} \h(n) \end{pmatrix*} \begin{pmatrix*}[c]  T^{-1} & -T^{-1} \\ 0 & T^{-1} \end{pmatrix*} \\
   &= \begin{pmatrix*}[c] S^{-1} \h(n) T^{-1} & 0 \\ 0 & 2 S^{-1} \h(n) T^{-1} \end{pmatrix*} = \begin{pmatrix*}[r] A & 0 \\ 0 & 2A \end{pmatrix*}.
 \end{align*}

 The resulting matrix is the Smith Normal Form of $\h(2n)$---after a suitable reordering of the diagonal elements to ensure 
 that $\alpha_i | \alpha_{i+1} \ \mbox{\rm for all } 1 \leq i \leq 2n-1$. The lemma then follows from the known relation ${{k+1} \choose {j}}= {k \choose {j-1}} + {k \choose {j}}$ of Pascal's triangle.
\end{proof}

\section{An improved triangulation procedure}
\label{Sec:improved}

Using Proposition \ref{Prop:1}, we can triangulate a complex $C \in$ DC(H$(n)$) with $\Theta(n^2)$ vertices to produce torsion of size $n^{\frac{n}{2}} \in \Theta(2^{n \log  n})$. 
Asymptotically, this is a worse bound than what can be achieved by considering the Speyer matrices $M(k)$ of Section~\ref{Subsec:Speyer} 
that yield triangulations with $\Theta(n)$ vertices and torsion of size $\Theta(2^n)$. The inferior torsion growth is due to using linearly many additional vertices 
inside each of the $n$ discs for the examples $C \in$ DC(H$(n)$), compared to the constant number of vertices needed for the discs associated 
with the rows of the lower part of the Speyer matrices $M(k)$. Our aim in this section is to provide a modified construction for complexes 
associated with the Hadamard matrices $\h(n)$ that requires in total only linearly many vertices, $5n-1$, but still yields torsion of size $n^{\frac{n}{2}}$.

\subsection{A modified CW disc construction}

Instead of triangulating complexes $C \in $ DC($\h(n)$) in the way outlined in Section \ref{Subsec:triangle}, in the following we consider CW complexes $\widetilde{C}$ 
that are derived from the matrices $\h(n)$ in a modified way, yet keeping the homotopy type, i.e. $\widetilde{C} \simeq C$. Our goal is to use exactly one interior vertex 
for each of the polygonal discs (see the square discs in the upper part of Figure \ref{Fig:4}).

The vertex $0$ appears $n$ times along the boundary of each of the polygonal discs and therefore has to be shielded off when we triangulate the interiors 
of the discs---to avoid unwanted identifications of interior edges of the discs.

The easiest way to shield off the special vertex $0$ is by connecting its two neighbors, at every occurence along the boundary of the discs, by a diagonal. 
However, we have to ensure that each such diagonal is used only once for all of the discs. To meet this requirement, we modify our CW disc construction:
\begin{itemize}
 \item A complex associated with $\h(n)$ has a single $0$-cell only.
 
 \item To each column $j$ of $\h(n)$ we associate two different edge cycles $a_j^+$ and $a_j^-$.
 
 \item To each row $i$ of $\h(n)$ we associate a polygonal disc, as before in Definition \ref{Def:DCM}. In addition we consider $n$ discs that each connect
  the two cycles $a_j^+$ and $a_j^-$.
\end{itemize}

For every complex $C \in$ DC($\h(n)$), we then define an \emph{augmented} complex $\widetilde{C}$, so that in each disc of $C$ we revert negatively oriented edges 
by using the connecting digon pieces; see Figure~\ref{Fig:5}.

\begin{figure}[t]
\begin{center}
\includegraphics[width=0.5\columnwidth]{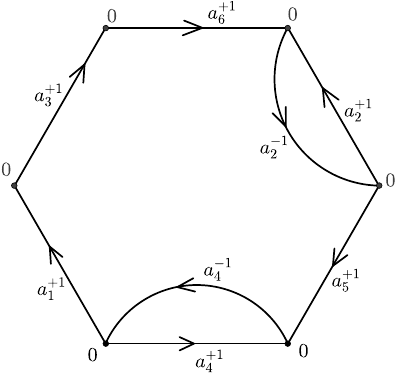}
\end{center}
\caption{A polygonal disc in the construction of $\widetilde C$.}
\label{Fig:5}
\end{figure}

The boundaries of the polygonal discs still represent the defining rows of the matrix $\h(n)$, and since the digon connectors are homotopy equivalent to single cycles, 
it follows that $\widetilde {C} \simeq C$ for the modified complexes.

Towards the interior of the discs, we have positively oriented edges only---which will allow (see Section~\ref{Subsec:valid}) 
to choose an ordering of the cycles on the boundary of each polygonal disc so that each diagonal that shields
the vertex $0$ occurs only once in $\widetilde{C}$.

\begin{remark}
An augmented complex $\widetilde{C}$ has a straight-forward representation by an augmented $(2n \times 2n)$-matrix $\widetilde{\h}(n)$ 
so that $\widetilde{C} \in$ DC$(\widetilde{\h}(n))$. Here, we obtain $\widetilde{\h}(n)$ from $\h(n)$ by splitting every column into two columns, 
where the first copy contains the positive entries of the original column and the second copy contains the absolute values 
of the negative entries of the original column. We further add $n$ new rows that represent the inserted digons.
 
 E.g., the matrix $\h(2)=\begin{pmatrix*}[r] 1 & 1 \\ 1 & -1 \end{pmatrix*}$ is augmented to
 \begin{equation*}
  \widetilde{\h}(2)=
  \begin{pmatrix*}
   \DoTikzmarkcol{num3} 1 & \DoTikzmarkcol{num5} 0 & \DoTikzmarkcol{num7} 1 & \DoTikzmarkcol{num9} 0 \\
    \DoTikzmarkcol{num4} 1  & \DoTikzmarkcol{num6} 0 & \DoTikzmarkcol{num8} 0 & \DoTikzmarkcol{num10}1  \\
   \DoTikzmark{num11} 1 & 1 \DoTikzmark{num12}& 0 & 0 \\
   0 & 0 & \DoTikzmark{num13} 1 & 1 \DoTikzmark{num14}
  \end{pmatrix*},
  \end{equation*}
 where the positive entries of $\h(2)$ are listed in the blue columns, whereas the green columns represent the negative entries of $\h(2)$. The connecting digons are highlighted in red.
 By subtracting the second column from the first one and the fourth column from the third, we obtain the matrix 
 $\begin{pmatrix*}[r] 1 & 0 & 1 & 0 \\ 1 & 0 & -1 & 1 \\ 0 & 1 & 0 & 0 \\ 0 & 0 & 0 & 1 \end{pmatrix*}$. In particular, it follows that $|\mbox{\rm det}(\widetilde {\h}(2))|=|\mbox{\rm det}(\h(2)|$.
\end{remark}

  \colcol[blue ,opacity=.25]{num3}{num4}
 \colcol[green ,opacity=.25]{num5}{num6}
  \colcol[blue ,opacity=.25]{num7}{num8}
 \colcol[green ,opacity=.25]{num9}{num10}
 \colrow[red ,opacity=.15]{num11}{num12}
 \colrow[red ,opacity=.15]{num13}{num14}

\subsection{Valid sequences}
\label{Subsec:valid}

To avoid identified diagonals, we next choose suitable orderings of the edge boundaries of the discs to select representatives $\widetilde{C}(n)\in DC(\widetilde{\h}(n))$ 
that can be triangulated with linearly many vertices. In particular, we ensure that two consecutive cycles occur exactly once along the boundaries of the discs.

\begin{definition}\label{Def:valid}
 Let $M=(M_{ij})$ be an $(n \times n)$-matrix with $\pm 1$-entries. A \emph{valid sequence} $(\tau_i^j)$
 is a sequence of orderings of the positively oriented $n$ boundary edges $a_j^{\pm}$, $j \in \{1,\ldots,n\}$, of the first $n$ discs, $i \in \{1,\ldots,n\}$, 
 of an augmented complex $\widetilde{C}\in DC(\widetilde{M})$, associated with $M$ that satisfy:

 \begin{enumerate}
  \item For each disc $i \in \{i, \ldots ,n\}$, ($\tau_i^*)$ is a permutation of the numbers $\{1, \ldots, n\}$, always starting with $1$,
  \item For all distinct disc indices $i_1,i_2$ and all edge indices $j_1,j_2$ such that  if two consecutive edge labels coincide, 
           $\tau_{i_1}^{j_1}=\tau_{i_2}^{j_2}$ and $\tau_{i_1}^{j_1+1}= \tau_{i_2}^{j_2+1}$,
           then at least one pair of corresponding matrix entries differs in sign, $M_{i_1,\tau_{i_1}^{j_1}}\neq M_{i_2,\tau_{i_2}^{j_2}}$ or $M_{i_1,\tau_{i_1}^{j_1+1}}\neq M_{i_2,\tau_{i_2}^{j_2+1}}$. 
           It is assumed that if $j_1$ is equal to $n$, then $j_1+1$ is equal to $1$, and the same for $j_2$.
 \end{enumerate}

\end{definition}

\begin{example}\label{Ex:sequence0}
The sequences $((1))$ and $((1,2),(1,2))$ are the unique valid sequences for $\h(1)$ and $\h(2)$, respectively. 
\end{example}

\begin{example}\label{Ex:sequence1}
 The sequence
 \begin{equation*}
  (\tau^j_i)=((1,3,2,4),(1,2,4,3),(1,3,2,4),(1,4,3,2))
 \end{equation*}
is a valid sequence for the Hadamard matrix

 $$\h(4)=
\begin{pmatrix*}[r]
1 & 1 & 1 & 1 \\
1 & -1 & 1 & -1 \\
1 & 1 & -1 & -1 \\
1 & -1 & -1 & 1 
\end{pmatrix*}.
$$
The first permutation (1,3,2,4) gives the ordering of the edges of the first disc that is associated to the first row of the matrix $\h (4)$.
As all entries of this first row are positive, we have all corresponding edges of the first disc with forward orientation.

In this example, the first and the third permutation are identical. In particular, the first parts (1,3) of the two permutations coincide,
but $\h(4)_{13}\neq \h(4)_{33}$. Another consecutive pair that appears in the second and the fourth permutation is (4,3), but 
$\h(4)_{24}\neq \h(4)_{44}$ and $\h(4)_{23}\neq \h(4)_{43}$.
If we compare all further consecutive edge pairs,
we can easily check that the given sequence of permutations is a valid sequence.
\end{example}

Using the inductive definition
\begin{equation*}
 \h(2n)=
\begin{pmatrix*}[r]
\h(n) & \h(n)\\
\h(n) & -\h(n)
\end{pmatrix*}
\end{equation*}
for the Hadamard matrices $\h(n)$, with the above sequence for $\h(1)$ as the base for the induction, we next provide a procedure 
to obtain a valid sequence $(\widetilde{\tau}_i^j$) for $\h(2n)$ from any valid sequence $(\tau_i^j)$ for $\h(n)$. 

For any fixed $i \in \{1, \ldots , n\}$, we create two permutations of the numbers $\{1, \ldots, 2n\}$ for $\h(2n)$, starting from the permutation $\tau_i^*$ of $\h(n)$:
\begin{align*}
 \beta_{i}^* = &(\tau^1_i,\ldots,\tau^n_i;\tau^1_i+n,\ldots,\tau^n_i+n), \\
 \gamma_{i}^* = &(\tau^1_i,\tau^2_i+n,\tau^3_i,\tau^4_i+n,\ldots, \tau^n_i+n;\tau^1_i+n,\tau^2_i,\tau^3_i+n,\tau^4_i,\ldots,\tau^n_i),
\end{align*}
i.e., $\beta_{i}^*$ is obtained from $\tau_i^*$ by first taking a copy of $\tau_i^*$ followed by another copy of $\tau_i^*$, where $n$ is added to each entry of the second copy;
and $\gamma_{i}^*$ is obtained by adding $n$ to all even positions of a copy of $\tau_i^*$ followed by doing so for all odd positions of a second copy of $\tau_i^*$.

\begin{proposition}\label{Prop:2}
 Given a valid sequence $(\tau_i^j)$ for $\h(n)$, the sequence $(\widetilde{\tau}_i^j)$, inductively defined by
\[
    \widetilde{\tau}_i^j= 
\begin{cases}
    \beta_{i}^j,& \text{if}\,\,\,\, i\leq n,\\
    \gamma_{i-n}^j, & \text{if}\,\,\,\, i> n,
\end{cases}
\ \ \ \mbox{\rm for}\,\,\,\, i,j \in \{1,\ldots , 2n\},
\]
is a valid sequence for $\h(2n)$.
\end{proposition}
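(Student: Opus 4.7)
The plan is to verify the two requirements of Definition~\ref{Def:valid} for $(\widetilde{\tau}_i^j)$ separately. For condition~(1), I need each row to be a permutation of $\{1,\ldots,2n\}$ starting with $1$, which is a direct calculation: $\beta_i^\ast$ concatenates $\tau_i^\ast$ (valued in $\{1,\ldots,n\}$) with $\tau_i^\ast+n$ (valued in $\{n+1,\ldots,2n\}$) and begins with $\tau_i^1=1$; and $\gamma_i^\ast$, using that $n=2^k$ is even, interleaves a shifted and an unshifted copy of $\tau_i^\ast$ so that every value in $\{1,\ldots,2n\}$ appears exactly once, again beginning with $\tau_i^1=1$.

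For condition~(2), I would organize the case analysis by the ``type'' $(s,s),(s,l),(l,l),(l,s)$ of each consecutive pair, where $s$ indicates a value $\le n$ and $l$ a value $>n$. In $\beta_i^\ast$ the pairs are $(s,s)$ for positions $j=1,\ldots,n-1$, a single $(s,l)$ at the seam $j=n$, $(l,l)$ for $j=n+1,\ldots,2n-1$, and a single $(l,s)$ at the wrap $j=2n$. In $\gamma_i^\ast$ (again using that $n$ is even) the pairs alternate between $(s,l)$ and $(l,s)$ through the bulk, with one exceptional $(l,l)$ at the seam and one $(s,s)$ at the wrap. Since any match of pairs requires equal types, this already drastically narrows the positions that need to be compared.

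I then split into three cases: both rows $\beta$, both rows $\gamma$, or mixed. In the mixed case (one $\beta_{i_1}$ with $i_1\le n$, one $\gamma_{i_2-n}$ with $i_2>n$), every candidate type agreement pits a bulk pair from one sequence against an isolated opposite-type pair from the other, and equating the coordinates always forces $\tau_i^q=1$ for one of the two rows and some position $q\ge 2$, which contradicts $\tau_i^1=1$ together with the permutation property; so condition~(2) is vacuously satisfied here. In each homogeneous case, bulk-to-bulk matches correspond to matching consecutive $\tau$-pairs, and the validity of $(\tau_i^j)$ supplies a sign difference in $\h(n)$ that transfers to $\h(2n)$ via its $2\times 2$ block structure. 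The remaining seam and wrap matches all reduce to the $\tau$-wrap pair $(\tau_i^n,1)$, where the only apparent escape route---a sign difference occurring at column~$1$---is blocked by the fact that the first column of the Walsh-type $\h(n)$ is identically $+1$ (an easy induction from $\h(1)=(1)$ using the Sylvester construction), so the needed sign difference is forced into the $\tau_i^n$-column and propagates to $\h(2n)$.

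The step I expect to be the main obstacle is tracking signs through the lower-right $-\h(n)$ block of $\h(2n)$: when both the row and the column of a compared entry fall inside that block the joint sign flip cancels, but when only one coordinate does, the single flip reverses the comparison and has to be handled individually in each sub-case. Once the four pair-types are tabulated together with the block-level sign map $\h(2n)_{i,j}\mapsto\pm\,\h(n)_{i',j'}$, condition~(2) follows from the corresponding property of $(\tau_i^j)$ by mechanical substitution.
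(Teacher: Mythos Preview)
Your proposal is correct and follows essentially the same strategy as the paper: verify condition~(1) directly, then split condition~(2) into the four cases $\beta$--$\beta$, $\gamma$--$\gamma$, $\beta$--$\gamma$, $\gamma$--$\beta$, reduce the homogeneous cases to the validity of $(\tau_i^j)$ for $\h(n)$ via the block structure of $\h(2n)$, and rule out matches in the mixed case by forcing a value $1$ (or $n+1$) into a non-initial position of some $\tau_i^\ast$.

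The one place you work harder than necessary is the seam/wrap sub-cases in the homogeneous comparisons. You invoke the fact that the first column of $\h(n)$ is identically $+1$ to force the sign difference supplied by validity into the $\tau_i^n$-column. The paper avoids this: for $i_1,i_2\le n$ one has $\h(2n)_{i_1,c}=\h(n)_{i_1,c'}$ for \emph{every} column $c$ (with $c'=c$ or $c-n$), and for $i_1,i_2>n$ one has $\h(2n)_{i_1,c}=\pm\h(n)_{i_1-n,c'}$ with a sign that depends only on $c$; since $\widetilde\tau_{i_1}^{j_1}=\widetilde\tau_{i_2}^{j_2}$, that sign is the same on both sides, so whichever inequality validity of $(\tau_i^j)$ produces transfers verbatim to $\h(2n)$, seam and wrap positions included. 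Your explicit $(s,s)/(s,l)/(l,l)/(l,s)$ bookkeeping is a clean organizing device, but in the homogeneous cases it is not needed.
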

In the definition of $\widetilde{\tau}_i^j$ in Proposition~\ref{Prop:2}, we first take all $n$ permutations $\beta_{i}^j$ of length $2n$
and then (after a shift of the row index by $n$) all $n$ permutations $\gamma_{i}^j$ of length $2n$ (in the given order, respectively).

\begin{example}\label{Ex:sequence2}
Starting with the valid sequence for $\h(2)$ from Example \ref{Ex:sequence0}, by the construction of Proposition~\ref{Prop:2} 
we obtain the following valid sequence for $\h(4)$:
\begin{align*}
  (\widetilde{\tau}_i^j)=&((1,2,3,4), (1,2,3,4), (1,4,3,2), (1,4,3,2)), 
 \end{align*}
which is different from the sequence of Example \ref{Ex:sequence1}, showing that valid sequences are not necessarily unique.
\end{example}

\begin{example}
 Starting with the valid sequence for $\h(4)$ from Example \ref{Ex:sequence2}, we obtain the following valid sequence for $\h(8)$:
 \begin{align*}
  (\widetilde{\tau}_i^j)=&((1,2,3,4,5,6,7,8), (1,2,3,4,5,6,7,8), (1,4,3,2,5,8,7,6), (1,4,3,2,5,8,7,6), \\
  & \mbox{} \hspace{1.75mm} (1,6,3,8,5,2,7,4), (1,6,3,8,5,2,7,4),  (1,8,3,6,5,4,7,2),(1,8,3,6,5,4,7,2)).
 \end{align*}
\end{example}

 \begin{proof}[Proof of Proposition \ref{Prop:2}]
  It is clear from the construction that for every $i$, $\widetilde{\tau}_i^*$ is a permutation of the numbers $\{1,\ldots, 2n\}$ starting with $1$, 
  so we only need to check the second condition to prove that $(\widetilde{\tau}_i^j)$ is a valid sequence for $\h(2n)$.
  
  The sequence $(\widetilde{\tau}_i^j$) for $\h(2n)$ is obtained from the valid sequence $(\tau_i^j)$ for $\h(n)$ in an inductive way.
  Let $i_1,i_2,j_1,j_2\in \{1,\ldots, 2n\}$, $i_1\neq i_2$, be such that $\widetilde{\tau}_{i_1}^{j_1}=\widetilde{\tau}_{i_2}^{j_2}$ and  $\widetilde{\tau}_{i_1}^{j_1+1}=\widetilde{\tau}_{i_2}^{j_2+1}$, 
  i.e.,  in the permutations corresponding to the two different discs $i_1,i_2$ two consecutive edge labels are the same.  
  Let $\hat{i}_1=i_1$ for $1\leq i_1\leq n$ and  $\hat{i}_1=i_1-n$ for $n+1\leq i_1\leq 2n$
  be the original disc indices in the initial valid sequence $(\tau_i^j)$ . 
  Same for $\hat{i}_2$ and analogously for $\hat{j}_1,\hat{j}_2$. It then follows by construction that $\tau_{\hat{i}_1}^{\hat{j}_1}=\tau_{\hat{i}_2}^{\hat{j}_2}$ 
  and $\tau_{\hat{i}_1}^{\hat{j}_1+1}=\tau_{\hat{i}_2}^{\hat{j}_2+1}$, as in the transition from $(\tau_i^j)$ to $(\widetilde{\tau}_i^j)$ either $n$ was added to the entries or not.
  There are then four different cases to consider, where we compare pairs of $\beta$- and $\gamma$-permutations:
  
  \begin{itemize}
   \item \mbox{[$\beta$--$\beta$]} $i_1,i_2 \leq n$: In this case $\h(2n)_{i_1,{\widetilde{\tau}_{i_1}^{j_1}}}=\h(n)_{\hat{i}_1,{\tau}_{\hat{i}_1}^{\hat{j}_1}}$, 
   and the same correlation holds for the other three matrix entries that appear in the definition of a valid sequence.  
   By assumption, $(\tau_{\hat{i}}^{\hat{j}})$ is a valid sequence for $\h(n)$, which implies
   $\h(n)_{\hat{i}_1,\tau_{\hat{i}_1}^{\hat{j}_1}}\neq \h(n)_{\hat{i}_2,\tau_{\hat{i}_2}^{\hat{j}_2}}$ or $\h(n)_{\hat{i}_1,\tau_{\hat{i}_1}^{\hat{j}_1+1}}\neq \h(n)_{\hat{i}_2,\tau_{\hat{i}_2}^{\hat{j}_2+1}}$.
   By the correlation between the entries of $\h(n)$ and $\h(2n)$ as written before, it follows that
   $\h(2n)_{i_1,\tau_{i_1}^{j_1}}\neq \h(2n)_{i_2,\tau_{i_2}^{j_2}}$ or $\h(2n)_{i_1,\tau_{i_1}^{j_1+1}}\neq \h(2n)_{i_2,\tau_{i_2}^{j_2+1}}$.

   \item \mbox{[$\gamma$--$\gamma$]} $i_1,i_2 >n$: This time $\h(2n)_{i_1,\widetilde{\tau}_{i_1}^{j_1}}=\pm \h(n)_{\hat{i}_1,{\tau}_{\hat{i}_1}^{\hat{j}_1}}$ with ``$+$'' for column indices 
   $1\leq \widetilde{\tau}_{i_1}^{j_1} \leq n$ and ``$-$'' otherwise. But this does not change anything, since $\widetilde{\tau}_{i_1}^{j_1}=\widetilde{\tau}_{i_2}^{j_2}$.
   Thus both $\h(2n)_{i_1,\widetilde{\tau}_{i_1}^{j_1}}$ and $\h(2n)_{i_2,\widetilde{\tau}_{i_2}^{j_2}}$ will keep the sign or change it,
    so we can use the same argument as before saying that since $(\tau_{\hat{i}}^{\hat{j}})$ was a valid sequence for $\h(n)$, we have a pair of matrix entries that are not equal.
   
   \item \mbox{[$\beta$--$\gamma$]} $i_1 \leq n$ and $i_2 >n$: As in the definition of $\beta_{i}^*$ and $\gamma_{i}^*$ for some permutation~$\tau_{i}^*$, the number $n$
   was added to all entries of the second half to obtain $\beta_{i}^*$, but added in an alternating fashion to the entries for obtaining $\gamma_{i}^*$, only the following 
   boundary cases are possible for which consecutive edge labels $\widetilde{\tau}_{i_1}^{j_1}=\widetilde{\tau}_{i_2}^{j_2}$ 
   and  $\widetilde{\tau}_{i_1}^{j_1+1}=\widetilde{\tau}_{i_2}^{j_2+1}$ can agree:
   positions $j_1=n$ or $j_1=2n$ in $\beta_{i}^*$ or $j_2=n$ or $j_2=2n$ in $\gamma_{i}^*$ . 
   If $j_1=n$, then $\widetilde{\tau}_{i_1}^{j_1+1}=n+\tau_{\hat{i}_1}^1=n+1=n+\tau_{\hat{i}_2}^1=\widetilde{\tau}_{i_2}^{n+1}$. In this case, the only possibility is that $j_2=n$, 
   but by construction, we have that $\widetilde{\tau}_{i_1}^{j_1}\neq\widetilde{\tau}_{i_2}^{j_2}$. The same argument can be used in the three other cases.
   
   \item \mbox{[$\gamma$--$\beta$]} $i_1 >n$ and $i_2 \leq n$: This case is analogous to the previous one.
  \end{itemize}
  
 In all four cases we showed that the second condition for having a valid sequence is satisfied, so we proved that $(\widetilde{\tau}_i^j)$ is a valid sequence for $\h(2n)$.
 \end{proof}

As a consequence of Proposition~\ref{Prop:2}  and the Examples~\ref{Ex:sequence0} and \ref{Ex:sequence1} we have that for every $n=2^k, \ k \geq 2$, 
there is a (not necessarily unique) valid sequence for the Hadamard matrix $\h(n)$. 

Via the recursive procedure of Proposition~\ref{Prop:2}, it takes quadratic time to obtain a valid sequence for $\h(n)$ from a valid sequence for $\h(\frac{n}{2})$,
as the $\left(\frac{n}{2}\right)^2$ numbers in the valid sequence for $\h(\frac{n}{2})$ have to be read and copied in modified form four times.
This gives us a recursive formula, $t(n)=cn^2+t(\frac{n}{2})$, for the total number of steps $t(n)$ to obtain a valid sequence for $\h(n)$ from the unique valid sequence for $\h(1)$.
It follows that for $n=2^k$, $t(n)\in O(\sum_{i=0}^k(2^i)^2)$, where $\sum_{i=0}^k(2^i)^2=2^{2k}\sum_{i=0}^k(2^{-i})^2$ gives us that $t(n)\in O(n^2)$.

\subsection{Triangulations HMT\mathversion{bold}$(n)$\mathversion{normal}  of the Hadamard examples H\mathversion{bold}$(n)$\mathversion{normal}}

Let $(\tau_i^j)$ be the valid sequence for the $(n \times n)$-Hadamard matrix $\h(n)=(\h_{ij})$ that is derived from the unique valid sequence of $\h(1)$
by the above procedure. 

From now on, we assume $n\geq 2$, as for $n=1$ there is only one disc, which can easily be triangulated as a single triangle
(hereby subdividing the boundary loop). Further, we will work with the augmented CW complex $\widetilde{C}(n)\in DC(\widetilde{\h}(n))$  
as defined earlier, i.e., in the polygonal disc corresponding to the row $i$ the order of the edges is given by $\tau_i^*$. 
In the following,  we will construct a triangulation $\mbox{\rm HMT}(n)$ of the complex $\widetilde{C}(n)$: 

\begin{figure}[t]
\begin{center}
\includegraphics[width=0.7\columnwidth]{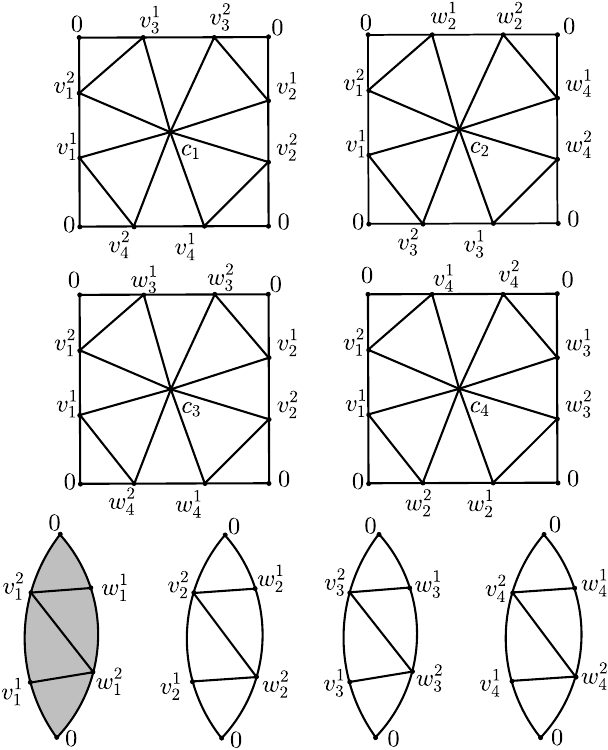}
\end{center}
\caption{The simplicial complex $\mbox{\rm HMT}(4)$ associated with the Hadamard matrix $\h(4)$.}
\label{Fig:4}
\end{figure}

\begin{itemize}
 \item For each column $j$ of $\h(n)$ we subdivide the two corresponding cycles $a_j^{+}$ and $a_j^{-}$ by using two additional vertices each, 
in particular, the subdivision of $a^{+}_j$ will be of the form $0$--$v^1_j$--$v^2_j$--$0$ and the subdivision of $a^{-}_j$ will be of the form $0$--$w^1_j$--$w^2_j$--$0$, 
using a total of $4n+1$ vertices for the $2n$ cycles and the initial vertex $0$.
 
 \item For each row $i$, we triangulate the interior of the corresponding disc (that has only positively oriented edges).  Each copy of the special vertex $0$ 
 is shielded off by a triangle that contains the respective copy and its two adjacent vertices, which together uses $n$ triangles.
Then a single additional vertex $c_i$ is placed at the center of the disc and connected to the $2n$ subdivision vertices of the loops
using $2n$ triangles. In total, this gives $3n$ triangles and a single additional vertex to triangulate one of the discs.

 \item We triangulate each of the $n$ digons between the loops $a_j^{+}$ and $a_j^{-}$ using the following four triangles:
 \begin{equation*}
  \{ \{0, v_j^1, w_j^2\}, \{v_j^1, v_j^2, w_j^2\}, \{v_j^2, w_j^1, w_j^2\}, \{0, v_j^2, w_j^1\} \}, \ \ \mbox{\rm for }\,\, j \in \{1, \ldots, n\}.
 \end{equation*}

\end{itemize}

The triangulation corresponding to the valid sequence 
described in Example~\ref{Ex:sequence2} for $\h(4)$ is drawn in Figure \ref{Fig:4}.

By the definition of a valid sequence, we easily see that the interiors of the $n$-gons do not share a single edge. This implies that  $\widetilde{\h}(n)$
is the boundary matrix of the first homology of our newly constructed simplicial complex. 

\begin{remark}\label{Rem:2}
 We actually never use the (negative) cycle $a_1^{-}$ in the construction of the $n$-gons, since for every $n$, the first column of $\h(n)$ has only $+1$'s. 
 We therefore delete from our construction the digon $a_1^{+}a_1^{-}$ (marked in grey in Figure \ref{Fig:4}) and save two vertices, $w_1^1$ and $w_1^2$, 
 without modifying the homotopy type of our complex. In total, we then need $5n-1$ vertices in our construction.
\end{remark}

Since in Section~\ref{Subsec:valid} we showed the existence of valid sequences and given that we know the determinant and the Smith Normal Form of the Hadamard matrices $\h(n)$,
 thanks to Lemma~\ref{Lemma:Smith}, we obtain the following: 

\begin{theorem}
 For each $n=2^k, \ k \geq 1$, there is a $\mathbb{Q}$-acyclic $2$-dimensional simplicial complex $\mbox{\rm HMT}(n)$ with face vector $f(\mbox{\rm HMT}(n))=(5n-1,3n^2+9n-6,3n^2+4n-4)$ and $H_*(\mbox{\rm HMT}(n))=(\Z, T(\mbox{\rm HMT}(n)), 0)$. The torsion in first homology is given by
 \begin{equation*}
  H_1(\mbox{\rm HMT}(n))=T(\mbox{\rm HMT}(n))=(\Z_2)^{{k}\choose{1}} \times (\Z_4)^{{k}\choose{2}}  \times \cdots \times (\Z_{2^k})^{{k}\choose{k}},
 \end{equation*}
where $|T(\mbox{\rm HMT}(n))|=n^{n/2} \in \Theta(2^{n\log n})$. Furthermore, the examples $\mbox{\rm HMT}(n)$ can be constructed algorithmically in quadratic time $\Theta(n^2)$.
\end{theorem}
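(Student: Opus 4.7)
The plan is to verify four claims in turn: (i) that $\mbox{\rm HMT}(n)$ is an honest abstract simplicial complex, (ii) its face vector, (iii) its homology, and (iv) its running-time bound. The key machinery---the augmented disc complex $\widetilde{C}(n)$, a valid sequence $(\tau_i^j)$ guaranteed by Proposition~\ref{Prop:2}, and the Smith normal form of $\h(n)$ from Lemma~\ref{Lemma:Smith}---is already assembled.

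For (i), I would check each family of introduced simplices in turn. The $2n-1$ remaining subdivided cycles are paths of length three with two fresh interior vertices each, so they carry no loops and edges of different cycles are disjoint. Inside each disc~$i$, the $2n$ fan edges emanate from the unique centre $c_i$, so they are automatically distinct from each other and from the fan edges of other discs. The delicate point is the collection of $n$ \emph{shielding diagonals} per disc: for each copy of the basepoint $0$ between consecutive cycles $a_{\tau_i^j}^{\varepsilon}$ and $a_{\tau_i^{j+1}}^{\varepsilon'}$, the diagonal joins one of $\{v_{\tau_i^j}^2,w_{\tau_i^j}^2\}$ to one of $\{v_{\tau_i^{j+1}}^1,w_{\tau_i^{j+1}}^1\}$, the sign choices $\varepsilon,\varepsilon'$ being determined by the row entries of $\h(n)$. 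Two such diagonals in distinct discs $i_1\neq i_2$ coincide exactly when both consecutive cycle indices and both sign choices agree, which is precisely what condition~(2) of Definition~\ref{Def:valid} forbids. The four-triangle digon pieces are explicit and introduce only three new edges each.

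For (ii), I would simply count. Vertices: $1+4n-2+n=5n-1$ (basepoint, $2n$ vertices $v_j^1,v_j^2$ and $2n$ vertices $w_j^1,w_j^2$, minus the two vertices saved by Remark~\ref{Rem:2}, plus $n$ centres $c_i$). Triangles: each of the $n$ discs yields $n$ shielding plus $2n$ fan triangles, giving $3n^2$, and the $n-1$ remaining digons contribute $4(n-1)=4n-4$, for a total of $3n^2+4n-4$. Edges: the $2n-1$ subdivided loops contribute $3(2n-1)=6n-3$, each disc contributes $n$ shielding diagonals plus $2n$ fan edges (so $3n^2$ in total), and each remaining digon contributes the three new edges $v_j^1w_j^2,\,v_j^2w_j^2,\,v_j^2w_j^1$ for a total of $3(n-1)=3n-3$; summing yields $3n^2+9n-6$. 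As a sanity check, the Euler characteristic works out to $1$, consistent with $\mathbb{Q}$-acyclicity.

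For (iii) and (iv), I would observe that $\mbox{\rm HMT}(n)$ subdivides $\widetilde{C}(n)$ without altering its homotopy type, and $\widetilde{C}(n)\simeq C$ for any $C\in DC(\h(n))$ because the inserted digons deformation-retract onto the underlying cycles. Lemma~\ref{Lemma:Hom} then gives $H_0=\Z$ and computes $H_1$ from the rows of $\h(n)$ as relations, while $H_2=\ker\h(n)=0$ since $\det\h(n)=n^{n/2}\neq 0$. Feeding the Smith normal form from Lemma~\ref{Lemma:Smith} into this computation produces the claimed decomposition, whose order $\prod_{j=0}^{k}2^{j{k\choose j}}=2^{k\cdot 2^{k-1}}=n^{n/2}$ lies in $\Theta(2^{n\log n})$; this also certifies $\mathbb{Q}$-acyclicity. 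The complexity is immediate from Section~\ref{Subsec:valid}: the valid sequence is produced in $\Theta(n^2)$ time, and writing down the $\Theta(n^2)$ faces of $\mbox{\rm HMT}(n)$ from it takes another $\Theta(n^2)$ steps. The main obstacle, as already anticipated by the whole machinery of Section~\ref{Subsec:valid}, is item~(i): the verification that shielding diagonals do not collide across different discs; everything else is bookkeeping on top of the lemmas already proven.
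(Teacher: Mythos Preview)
Your proposal is correct and follows essentially the same path as the paper: the face counts, the appeal to the valid sequence from Proposition~\ref{Prop:2}, the Smith normal form from Lemma~\ref{Lemma:Smith}, and the $\Theta(n^2)$ running-time estimate all match. The only real difference is in how you obtain $H_2=0$: you pass through the homotopy equivalence $\mbox{\rm HMT}(n)\simeq\widetilde{C}(n)\simeq C\in DC(\h(n))$ and invoke $\ker\h(n)=0$, whereas the paper stays inside $\mbox{\rm HMT}(n)$ and uses the Euler-characteristic computation $\chi=1$ together with the absence of a free part in $H_1$; both are short and valid. You are also somewhat more explicit than the paper about why the shielding diagonals are pairwise distinct across discs (the paper just asserts ``by the definition of a valid sequence, we easily see that the interiors of the $n$-gons do not share a single edge''), which is a welcome clarification rather than a departure.
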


\enlargethispage*{10mm}

\begin{proof}
 For any $k \geq 1$, due to Proposition \ref{Prop:2}, we have a valid sequence for $\h(n)$, where $n=2^k$. By Remark~\ref{Rem:2}, $\mbox{\rm HMT}(n)$ has $5n-1$ vertices. 
 For the edges, we have that each of the $2n-1$ cycles (not $2n$, because one is not used according to Remark \ref{Rem:2}) has three distinct edges. 
 By construction we are adding exactly $3n$ distinct edges in the interior of each of the $n$ polygonal discs and three additional edges for each of the $n-1$ digons, 
 yielding a total of $3n^2+9n-6$ edges. Again by construction, we are triangulating each of the $n$ polygonal discs with $3n$ triangles and each of the $n-1$ digons 
 with four additional triangles, which gives a total of $3n^2+4n-4$ triangles. 
 
 The simplicial complex $\mbox{\rm HMT}(n)$ is clearly connected, so $H_0(\mbox{\rm HMT}(n))=\Z$, and the statement about $H_1(\mbox{\rm HMT}(n))$ 
 follows by Lemma \ref{Lemma:Smith} on the Smith Normal Form of the Hadamard matrices $\h(n)$. The Euler characteristic of the simplicial complex 
 $\mbox{\rm HMT}(n)$ is~$1$ and we do not have a free part in the first homology, so we immediately obtain that $H_2(\mbox{\rm HMT}(n))=0$.
 
 Our construction of a valid sequence for $\h(n)$ takes quadratic time in $n$. Given the valid sequence, building the triangulation $\mbox{\rm HMT}(n)$
 takes linear time (in the size of the valid sequence), i.e., quadratic time in $n$ to output the triangulation with \mbox{$3n^2+4n-4$} triangles. Thus, in total,
 the examples $\mbox{\rm HMT}(n)$ are constructed in quadratic time $\Theta(n^2)$.
\end{proof}

An implementation \texttt{HMT.py} of our construction in \texttt{python} is available on GitHub at~\cite{HTPy}. Triangulations of the examples
$\mbox{\rm HMT}(2^k)$, $2\leq k\leq 5$, can be found online at the ``Library of Triangulations'' \cite{library}.

\begin{remark}
The construction above is not necessarily producing vertex-minimal triangulations of the disc complexes associated with the Hadamard matrices. 
For example, with the Random Simple Homotopy heuristic RSHT from~\cite{benedetti2021random} we were able to reduce the examples $\mbox{\rm HMT}(2)$, $\mbox{\rm HMT}(4)$, 
and $\mbox{\rm HMT}(8)$ to smaller triangulations with 6, 11, and 34 vertices, respectively.


A general strategy for a possible asymptotic reduction of the number of vertices needed is by Newman \cite{newman2019} via the \emph{pattern complex} for a given complex.
In particular, Newman used a revisited and randomized Speyer's construction that yields linearly many edges in the number 
of vertices for complexes associated with a given abelian group $G$ as their torsion group to obtain pattern complexes 
with $\Theta(\log (|G|)^{\frac{1}{2}})$ vertices---thus achieving Kalai's asymptotic growth $\Theta(2^{n^2})$ via a randomized construction. 
In our construction, however, the number of edges is quadratic in the number of vertices, and therefore the effect of taking
the pattern complex gives at most a linear improvement, which is not changing the asymptotic growth $\Theta(2^{n \log n})$ 
of the torsion size for our series of triangulations.
\end{remark}

\enlargethispage*{7.5mm}

\begin{remark}
We have been made aware (by an anonymous reviewer of an earlier conference submission of this paper) 
 of another folklore, but unpublished approach to achieve torsion that is exponential in $n^2$.
 The idea of that construction is to use Steiner systems $S(2,6,n)$. Such a system consists of a family of $6$-element subsets (called blocks) 
 of an $n$-element set, with the property that every pair of vertices is contained in exactly one block.
 On every block, one can construct a $6$-vertex triangulation of $\mathbb{R}P^2$. This gives a complex 
 with $\Theta(n^2)$ edge-disjoint $\mathbb{R}P^2$'s whose torsion is therefore $\Z_2^{\Theta(n^2)}$. 
 The existence of such systems has recently been proved by Keevash in \cite{keevash2018}, though in 
 a non-constructive way. A 
 deterministic construction that asymptotically yields high torsion can still be obtained, 
 using the fact that it is enough to build on a partial system containing $\Theta(n^2)$ $6$-element blocks 
 where each pair is contained in \emph{at most} one block. 
 These can be generated 
 via a polynomial-time derandomization of Rodl's Nibble~\cite{grable1996nearly}, though the procedure 
 seems to be impractical and we are not aware of an implementation.
\end{remark}

\bibliographystyle{plainurl}

\small
\bibliography{bibliography}

\end{document}